\documentclass[12pt,reqno]{amsart}

\usepackage[T1]{fontenc}
\usepackage[utf8]{inputenc}
\usepackage[english]{babel}
\usepackage{amsthm,amssymb,amsmath}
\usepackage{hyperref}
\usepackage[top=1in, bottom=1in, left=1in, right=1in]{geometry}

\newtheorem{Theorem}{Theorem}[section]
\newtheorem{Lemma}[Theorem]{Lemma}
\newtheorem{Corollary}[Theorem]{Corollary}

\newtheorem{athm}{Theorem}

\theoremstyle{definition}

\newtheorem{Example}{Example}

\theoremstyle{remark}
\newtheorem{Note}{Remark}

\DeclareMathOperator{\Aut}{Aut}

\title{Finite skew braces whose additive group is a Z-group}

\author[Marco Damele]{Marco Damele}
\thanks{Dipartimento di Matematica, Università di Cagliari, Via Ospedale 72, 09124 Cagliari, Italy; \texttt{marco.damele@unica.it}; ORCID 0009-0008-3088-5766}

\date{}

\begin{document}
\maketitle

\begin{abstract}

Rump proved in \cite[Theorem~1]{Rump2018ClassificationOC} that if a finite skew brace has cyclic additive group, then its multiplicative group is solvable and almost Sylow cyclic. In this paper we show that this rigidity persists when the additive group is a \(Z\)-group. More precisely, we prove that if \(B\) is a finite skew brace whose additive group is a \(Z\)-group, then \((B,\cdot)\) is solvable and almost Sylow cyclic. In addition, we show that every such skew brace is supersolvable; in particular, \((B,\cdot)\) is \(2\)-nilpotent. This extends \cite[Theorem~3.8]{ballesterbolinches2024finiteskewbracessquarefree} and recovers, in this broader setting, another result of Rump \cite[Proposition 13]{Rump2018ClassificationOC}. Finally, we prove that for skew braces of odd order the additive group is a \(Z\)-group if and only if the multiplicative group is a \(Z\)-group.

\end{abstract}

\medskip

\emph{Mathematics Subject Classification (2020):} 16T25, 81R50, 20E07, 20F36.

\emph{Keywords:} skew brace, Z-group, almost Sylow cyclic.

\section{Introduction}
A \emph{skew brace} is a triple \((B,+,\cdot)\), where \((B,+)\) and \((B,\cdot)\) are groups on the same underlying set \(B\), called the \emph{additive group} and the \emph{multiplicative group} of \(B\), respectively, and satisfying
\begin{equation} \label{eq:left-dist}
    a\cdot (b+c)=a\cdot b-a+a\cdot c
\qquad
\text{for all } a,b,c\in B.
\end{equation}
Introduced by Guarnieri and Vendramin \cite{Guarnieri_2016}, skew braces provide a natural algebraic framework for the study of set-theoretic solutions of the Yang--Baxter equation. A natural problem in the theory of skew braces is to understand the interplay between the additive group \((B,+)\) and the multiplicative group \((B,\cdot)\). In particular, one would like to determine how properties of the additive group constrain those of the multiplicative group, and vice versa. More broadly, one seeks to understand how the properties of these two groups influence the internal structure of the skew brace \(B\) itself. For example, a classical result due to Smoktunowicz and Vendramin states that if \(B\) is a finite skew brace and \((B,+)\) is nilpotent, then \((B,\cdot)\) is solvable \cite{Smoktunowicz_2018}. This naturally leads to the broader question of whether solvability of the additive group forces solvability of the multiplicative group in every finite skew brace. At present, this problem remains open and continues to attract considerable attention; see \cite{Byott2, GorshkovNasybullov2021, Nas2, Tsang_2020}. By contrast, the corresponding statement fails in the infinite setting, as shown by examples due to Nasybullov \cite{Nas2}. On the other hand, in the Lie setting the analogue has recently been settled affirmatively in \cite{dameleLoi}. The case where the additive group is cyclic was studied by Rump \cite[Theorem~1]{Rump2018ClassificationOC}, who proved that if \((B,+)\) is cyclic, then \((B,\cdot)\) is solvable and almost Sylow cyclic. This provides a particularly striking instance of the principle that strong restrictions on the additive group may force a rigid structure on the multiplicative group. The main goal of this paper is to extend this phenomenon from cyclic groups to \(Z\)-groups. This is a natural direction in which to proceed, since cyclic groups are precisely the nilpotent finite \(Z\)-groups: a finite group is cyclic if and only if it is both nilpotent and a \(Z\)-group. Thus \(Z\)-groups provide the natural broader setting in which one may ask whether Rump's conclusion continues to hold. Our first result shows that this is indeed the case.

\begin{athm} \label{th: generalizedRump}
Let \(B\) be a finite skew brace such that \((B,+)\) is a \(Z\)-group. Then \((B,\cdot)\) is solvable and almost Sylow cyclic.
\end{athm}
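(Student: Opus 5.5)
The proof splits into two steps: first solvability via the map $\lambda$ and induction, then almost Sylow cyclicity one prime at a time, reducing to Rump's theorem \cite[Theorem~1]{Rump2018ClassificationOC} wherever possible.

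\textbf{Setup.} Write $A=(B,+)$. Since $A$ is a $Z$-group, it is metacyclic: $A\cong C_m\rtimes C_n$ with $\gcd(m,n)=1$, $A'=C_m$ and $A/A'\cong C_n$. We use the map
\[
\lambda\colon (B,\cdot)\to \Aut(B,+),\qquad \lambda_a(b)=-a+a\cdot b .
\]

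\textbf{Step 1: solvability.} The kernel $K=\ker\lambda$ is a subgroup of $(B,\cdot)$ on which $a\cdot b=a+b$. Hence $(K,\cdot)$ is isomorphic to a subgroup of $A$, and so is a $Z$-group, hence solvable. For the image, use that $A'=C_m$ is characteristic. This gives a homomorphism
\[
\Aut(A)\to \Aut(A')\times\Aut(A/A'),
\]
whose target is abelian. An automorphism in the kernel fixes $A'$ pointwise and acts trivially on $A/A'$, so it has the form $x\mapsto x\,\delta(x)$ with $\delta(x)\in A'$. The kernel is therefore controlled by crossed homomorphisms into the abelian group $A'$, and should itself be abelian, or at least solvable. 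Thus $\Aut(A)$ is solvable, so $\lambda(B)$ is solvable, and $(B,\cdot)$, an extension of $K$ by $\lambda(B)$, is solvable.

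\textbf{Step 2a: primes dividing $m$.} Let $p\mid m$ and let $P_+$ be the Sylow $p$-subgroup of $(B,+)$. It is the unique Sylow $p$-subgroup of the cyclic characteristic subgroup $A'$, hence characteristic in $A$. It is therefore invariant under every $\lambda_a$, i.e. a left ideal, so $(P_+,\cdot)$ is a subgroup of $(B,\cdot)$. Thus $P_+$ is a sub-skew brace with cyclic additive group, and its order is the full $p$-part of $|B|$, so $(P_+,\cdot)$ is a Sylow $p$-subgroup of $(B,\cdot)$. By Rump's theorem \cite[Theorem~1]{Rump2018ClassificationOC}, $(P_+,\cdot)$ is almost Sylow cyclic. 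Since it is a $p$-group, this means it is cyclic for $p$ odd, and has a cyclic subgroup of index at most $2$ for $p=2$.

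\textbf{Step 2b: primes dividing $n$.} This is the main obstacle. Here the additive Sylow $p$-subgroup is not normal, so we cannot take a left ideal directly. Let $Q$ be a Sylow $p$-subgroup of $(B,\cdot)$. First, $Q\cap K$ embeds in $A$, so it is cyclic. Second, $\lambda(Q)$ is a $p$-subgroup of $\Aut(A)$ with $p\nmid m$. Under the description of $\Aut(A)$ in Step 1, it maps into $\Aut(A')\times\Aut(A/A')$ with a kernel that is a $p'$-group. The reason is that the crossed-homomorphism part lives on $A'$, whose order $m$ is coprime to $p$, so for $p\nmid m$ this kernel has order coprime to $p$. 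I would first try to show that the image of $\lambda(Q)$ is cyclic, using that $\Aut(A/A')\cong\Aut(C_n)$ has cyclic Sylow $p$-subgroups for $p$ odd.

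Knowing that both $Q\cap K$ and $\lambda(Q)$ are cyclic still does not make $Q$ cyclic. To close the gap, I would use the strong left ideal $I=A'$, for which $a\cdot I=a+I$ for all $a$. The intended consequence is that $Q$ acts on the cyclic set of cosets $B/I\cong C_n$ in a way compatible with the $Z$-group structure. From this I would try to show that $Q$ is isomorphic to the Sylow $p$-subgroup of a skew brace with additive group $A/A'$, which is cyclic, and then apply Rump's theorem again.

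If this direct approach fails, the fallback is induction on $|B|$. Find a nontrivial ideal $J$ of $B$ contained in $A'$, for instance a $\lambda$-invariant Sylow subgroup of $A'$ that is also normal in $(B,\cdot)$, using Step 2a. Then apply the induction hypothesis to $B/J$, whose additive group is again a $Z$-group. Since $Q\cap J=1$ for $p\nmid m$, the Sylow $p$-subgroup $Q$ maps isomorphically into $B/J$, and almost Sylow cyclicity transfers.
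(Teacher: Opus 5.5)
Your Steps 1 and 2a are correct. Step 1 is a different, self-contained route to solvability; the paper instead cites Li et al.\ for skew braces with cyclic $(B,+)'$. To make it rigorous, replace ``should itself be abelian'' by a one-line check. If $\phi(x)=x\delta_\phi(x)$ and $\psi(x)=x\delta_\psi(x)$ with $\phi,\psi$ trivial on $A'$, then $\phi\psi(x)=x\delta_\phi(x)\delta_\psi(x)$, so the kernel is abelian and $\Aut(A)$ is metabelian. Step 2a handles the primes dividing $m$ directly, with no induction.

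The genuine gap is Step 2b, which is where the content of the theorem lies, and neither of your routes closes it. In the direct route, $A'$ is only a strong left ideal and in general not an ideal, so $B/A'$ is not a skew brace. Moreover, the image of the induced map $\rho\colon(B,\cdot)\to\mathrm{Hol}(A/A')$ is transitive but in general not regular. So nothing you wrote shows that $Q$ is a Sylow subgroup of a skew brace with additive group $C_n$.

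The fallback fails outright, as the following example shows. Let $G=C_7\rtimes C_6$ be the Frobenius group of order $42$, and let $\iota\colon G\to\mathrm{Hol}(C_7)$ be an isomorphism. Let $\alpha$ be the composite of $G\to G/C_7\cong C_6$ with an isomorphism onto a regular cyclic subgroup of $\mathrm{Hol}(S_3)$; such a subgroup exists, e.g.\ from the exact factorization $S_3=C_3C_2$. Then $g\mapsto(\alpha(g),\iota(g))$ embeds $G$ as a regular subgroup of $\mathrm{Hol}(S_3)\times\mathrm{Hol}(C_7)\cong\mathrm{Hol}(S_3\times C_7)$. Indeed, a stabilizing element has $\alpha(g)=1$, so $g\in C_7$ acts as a translation on $C_7$, forcing $g=1$. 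The resulting brace has $(B,+)\cong S_3\times C_7$, a $Z$-group with $A'\cong C_3$. But $(A',\cdot)$ is a non-normal Sylow $3$-subgroup of $(B,\cdot)\cong G$, so no nontrivial ideal lies inside $A'$. In this example $\rho$ is also injective, so $\rho(B,\cdot)$ has order $42$ acting on $14$ points, which illustrates the non-regularity above.

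The missing idea is to quotient, for $p\mid n$, by the normal Hall $p'$-subgroup $H\supseteq A'$ of $(B,+)$ rather than by $A'$. Since $H$ is characteristic, $(B,\cdot)$ acts transitively on $(B,+)/H\cong C_{p^a}$ through $\mathrm{Hol}(C_{p^a})$. A Sylow $p$-subgroup of a group acting transitively on $p^a$ points is itself transitive. Since $|Q|=p^a$, its image is regular and isomorphic to $Q$, so Rump's theorem applies. This is exactly the paper's argument.

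Alternatively, your own route can be completed using Step 1:
\begin{itemize}
\item $\ker\rho$ lies in the multiplicative subgroup $(A',\cdot)$ of order $m$, so $\rho|_Q$ is injective.
\item The point stabilizers of $\rho(B,\cdot)$ have order dividing $m$.
\item Hence a Hall subgroup of the solvable group $\rho(B,\cdot)$ for the primes dividing $n$ has order $n$ and acts regularly on $A/A'\cong C_n$.
\item This Hall subgroup contains a conjugate of $\rho(Q)\cong Q$, and Rump's theorem applies to it.
\end{itemize}
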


Theorem~\ref{th: generalizedRump} already shows that the assumption that the additive group is a \(Z\)-group imposes strong restrictions on the multiplicative group. In fact, this hypothesis has consequences at the level of the entire skew brace. Our second main result is perhaps the most striking aspect of the paper: from a condition on the additive group, one obtains a global structural property of the skew brace itself.

\begin{athm} \label{Th: B ssolv}
Let \(B\) be a finite skew brace such that \((B,+)\) is a \(Z\)-group. Then \(B\) is supersolvable.
\end{athm}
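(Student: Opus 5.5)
We need to show that every skew brace \(B\) with \((B,+)\) a \(Z\)-group is supersolvable. Following Bardakov--Neshchadim--Yadav, this means \(B\) has a chain of ideals \(0=I_0\subseteq I_1\subseteq\cdots\subseteq I_n=B\) such that each factor \(I_{k+1}/I_k\) is cyclic as an additive group. The plan is induction on \(|B|\), and the whole argument reduces to one claim: \(B\) has a nonzero ideal \(I\) with \((I,+)\) cyclic. Granting the claim, \(B/I\) is a skew brace whose additive group is a quotient of a \(Z\)-group, hence again a \(Z\)-group. By induction \(B/I\) is supersolvable, and lifting its chain of ideals through the projection \(B\to B/I\) and prepending \(0\subseteq I\) gives the required chain for \(B\).

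To produce the ideal, write \((B,+)=C_m\rtimes C_n\) with \(\gcd(m,n)=1\), where \(A=C_m=[B,B]_+\) is the additive derived subgroup. Let \(p\) be the largest prime dividing \(|B|\).

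First consider the case \(p\mid m\). The additive Sylow \(p\)-subgroup \(P\) is then the unique subgroup of order \(p^a\) inside the cyclic group \(A\). So \(P\) is characteristic in \((B,+)\), hence invariant under the \(\lambda\)-action, and therefore a left ideal. A characteristic subgroup of the additive group that is \(\lambda\)-invariant is also normal in \((B,\cdot)\), because \(a\cdot x\cdot a^{-1}=a+\lambda_a(x)-a\). Thus \(P\) is an ideal with cyclic additive group.

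Now consider the case \(p\nmid m\), so \(p\mid n\). Here I would use Theorem~\ref{th: generalizedRump}: \((B,\cdot)\) is solvable and almost Sylow cyclic. I also want to know that a Sylow \(p\)-subgroup for the largest prime \(p\) is normal in both \((B,+)\) and \((B,\cdot)\), and is stable under \(\lambda\).

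In a \(Z\)-group the Sylow subgroup for the largest prime is normal. This follows from the standard fact that the derived subgroup consists exactly of the primes whose Sylow subgroups are acted on nontrivially, and such a prime is always smaller than some prime it is acted on by. Hence \((B,+)\) has a normal, therefore characteristic, Sylow \(p\)-subgroup \(P\). Since \(P\) is characteristic, it is \(\lambda\)-invariant, and so, as in the first case, it is an ideal of \(B\) with \((P,+)\) cyclic.

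With that, the "largest prime" argument actually settles both cases uniformly: \(P\) is always an ideal with cyclic additive group, which proves the claim.

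The main obstacle is the number-theoretic lemma that a \(Z\)-group has a normal Sylow subgroup for its largest prime. I would prove it as follows. Take \(G=C_m\rtimes C_n\) and the largest prime \(p\).
\begin{itemize}
\item If \(p\mid m\), the Sylow \(p\)-subgroup lies in the cyclic normal subgroup \(C_m\), so it is characteristic in \(C_m\) and hence normal in \(G\).
\item If \(p\mid n\), the Sylow \(p\)-subgroup \(Q\) of \(C_n\) acts on each Sylow \(q\)-subgroup of \(C_m\), which is a cyclic \(q\)-group. Its automorphism group has order \(q^{k-1}(q-1)\), and all prime divisors of this number are at most \(q<p\). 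So \(Q\) acts trivially on \(C_m\), and since \(C_n\) is abelian, \(Q\) is central. A central Sylow subgroup is normal.
\end{itemize}
Finally, I would double-check that an additively characteristic, \(\lambda\)-invariant subgroup is indeed an ideal under the paper's conventions, using the identity \(a\cdot x\cdot a^{-1}=a+\lambda_a(x)-a\).
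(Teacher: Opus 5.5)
\textbf{There is a genuine gap.} Your lemma on the largest-prime Sylow subgroup of a $Z$-group is fine. The proof breaks at the step you treat as routine: that a characteristic (hence $\lambda$-invariant) subgroup of $(B,+)$ is normal in $(B,\cdot)$. The identity $a\cdot x\cdot a^{-1}=a+\lambda_a(x)-a$ is false in general. Expanding $a\cdot(x\cdot a^{-1})$ gives $a\cdot x\cdot a^{-1}=a+\lambda_a\bigl(x+\lambda_x(a^{-1})-a^{-1}\bigr)-a$, so your formula holds only when $\lambda_x(a^{-1})=a^{-1}$, e.g.\ for $x\in\ker\lambda$.

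Here is a counterexample inside the $Z$-group setting. On $(B,+)=\mathbb{Z}/6$, the product $a\cdot b=a+(-1)^a b$ defines a skew brace with $(B,\cdot)\cong S_3$. The characteristic subgroup $\{0,3\}$ is not multiplicatively normal: $1^{-1}=1$, $1\cdot 3=4$ and $4\cdot 1=5$.

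Your largest-prime step fails for the same reason whenever only characteristicity is used. On $(B,+)=C_2^2\times C_3$, take $(w,i)\cdot(w',i')=(w+\sigma^i w',\,i+i')$ with $\sigma\in\Aut(C_2^2)$ of order $3$; this gives $(B,\cdot)\cong A_4$. The additive Sylow $3$-subgroup $\{(0,i)\}$ is characteristic, $\lambda$-invariant and additively cyclic. Yet $(w,0)\cdot(0,1)\cdot(w,0)^{-1}=(w+\sigma w,1)$, so it is a non-normal Sylow $3$-subgroup of $A_4$. Your argument uses the $Z$-group hypothesis only to make $P$ characteristic with $(P,+)$ cyclic, so it would "prove" that $P$ is an ideal here too. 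Hence it does not establish the ideal you need.

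Multiplicative normality is exactly where the paper has to work, and it uses induction on a second ideal. Let $r$ be the smallest prime dividing $|B|$. Since $r\nmid|(B,+)'|$, \cite[Lemma~2.1]{GorshkovNasybullov2021} and the $Z$-group property give a characteristic $K$ with $(B,+)/K\cong C_r$. Then $(K,\cdot)$ has index $r$, the smallest prime, in $(B,\cdot)$, so it is normal and $K$ is an ideal. By minimality $(K,\cdot)$ is supersolvable. For the largest prime $p\neq r$, the subgroup $(P,\cdot)\le(K,\cdot)$ is therefore its normal Sylow $p$-subgroup, hence characteristic in $(K,\cdot)$ and normal in $(B,\cdot)$.

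There is also a definitional mismatch. The paper's supersolvability requires factors of prime order, not merely additively cyclic factors. So even with $P$ an ideal, you would still need the order-$p$ subgroup $I$ of $(P,+)$ to be an ideal. The paper gets this from Theorem~\ref{th: generalizedRump}: for odd $p$, $(P,\cdot)$ is cyclic, so $I$ is its unique subgroup of order $p$, which is characteristic in the normal subgroup $(P,\cdot)$. When $|B|$ is a power of $2$, it uses Rump's result $\mathrm{Soc}(B)\neq 0$ (Lemma~\ref{case p=2}). There the order-$2$ subgroup lies in $\mathrm{Soc}(B)\subseteq\ker\lambda$, which is exactly where your conjugation formula is valid.
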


Theorem~\ref{Th: B ssolv} extends \cite[Theorem~3.8]{ballesterbolinches2024finiteskewbracessquarefree} and recovers, in the broader setting of skew braces whose additive group is a \(Z\)-group, \cite[Proposition~13]{Rump2018ClassificationOC}. It is natural to ask to what extent the assumption that one of the two groups is a \(Z\)-group forces the same conclusion for the other. In general, the converse of Theorem~\ref{th: generalizedRump} does not hold (see Example \ref{example1}). However, the situation becomes symmetric for skew braces of odd order, where the \(Z\)-group condition turns out to be equivalent for the additive and multiplicative groups.

\begin{athm} \label{Th: Multiplicative-Z-group}
Let \(B\) be a finite skew brace of odd order. Then \((B,+)\) is a \(Z\)-group if and only if \((B,\cdot)\) is a \(Z\)-group.
\end{athm}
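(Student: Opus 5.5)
The statement has two directions, and I would handle them asymmetrically. The forward direction should follow from Theorem~\ref{th: generalizedRump}. If \((B,+)\) is a \(Z\)-group, then \((B,\cdot)\) is almost Sylow cyclic: its Sylow subgroups of odd order are cyclic, and the Sylow \(2\)-subgroup has a cyclic subgroup of index at most \(2\). Since \(|B|\) is odd, there is no Sylow \(2\)-subgroup to worry about. Hence every Sylow subgroup of \((B,\cdot)\) is cyclic, which is exactly the definition of a \(Z\)-group. So for odd order the forward direction is immediate once Theorem~\ref{th: generalizedRump} is available.

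For the converse, assume \((B,\cdot)\) is a \(Z\)-group of odd order. I would show that each Sylow \(p\)-subgroup of \((B,+)\) is cyclic, working by induction on \(|B|\) with ideals. Since \((B,\cdot)\) is a \(Z\)-group, it is metacyclic and hence solvable. The plan has three steps.

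\emph{Step 1: find a small nontrivial ideal.} I would first show that \(B\) has a nontrivial ideal \(I\), for instance a minimal one. One route is the socle or annihilator-type constructions of skew braces. Another is to use that \((B,\cdot)\) is solvable: this lets one invoke known results (e.g.\ Byott's, or the fact that for odd order the additive group is then solvable too) to obtain an ideal of prime order \(p\), or at least an ideal on which both group structures are cyclic.

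\emph{Step 2: apply induction.} The quotient \(B/I\) is a skew brace of odd order whose multiplicative group is a quotient of a \(Z\)-group, hence again a \(Z\)-group. By induction, \((B/I,+)\) is a \(Z\)-group. If \(I\) is chosen so that \((I,+)\) is cyclic of prime order \(p\), then \((B,+)\) is an extension of a cyclic group of order \(p\) by a \(Z\)-group.

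\emph{Step 3: rule out a non-cyclic Sylow \(p\)-subgroup.} Here I would compare with the multiplicative side. The Sylow \(p\)-subgroup of \((B,\cdot)\) is cyclic. A standard counting argument with the \(\lambda\)-action shows that the \(p\)-parts of \((B,+)\) and \((B,\cdot)\) have the same order. Using a Sylow \(p\)-subgroup \(P\) of \((B,\cdot)\), one wants a corresponding \(p\)-subgroup on the additive side: if \(P\) is a left ideal, or \(\lambda\)-invariant, then \((P,+)\) and \((P,\cdot)\) form a sub-skew brace of \(p\)-power order. Then the result for skew braces of \(p\)-power order applies, namely that a brace of order \(p^n\) with \(p\) odd and cyclic multiplicative group has cyclic additive group (Rump's and Bachiller's results for odd \(p\)). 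This forces \((P,+)\) to be cyclic, and hence a Sylow \(p\)-subgroup of \((B,+)\) is cyclic.

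The main obstacle is guaranteeing such a \(p\)-subskew brace, that is, finding a Sylow \(p\)-subgroup that is simultaneously a subgroup of both structures. My first attempt would use the smallest prime \(p\) dividing \(|B|\) together with the fact that \(Z\)-groups have a normal Hall \(p'\)-subgroup complement structure (Sylow tower). From this I would build a chain of ideals \(B=I_0\supset I_1\supset\cdots\) with cyclic factors of prime order. I would then argue via the factors that each Sylow subgroup of \((B,+)\) has all its \(p\)-chief factors arranged in a single chain, with the odd-order assumption excluding the non-cyclic possibilities. The \(p\)-power-order lemma is where oddness is genuinely needed, consistent with the failure shown in Example~\ref{example1}.
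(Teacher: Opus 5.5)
Your forward direction matches the paper's: Theorem~\ref{th: generalizedRump} plus $|B|$ odd. The converse, however, has a genuine gap. None of your three steps actually produces the object it relies on.

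In Step~1 you never exhibit a nontrivial proper ideal. The socle cannot serve: the trivial skew brace on the nonabelian $Z$-group $C_7\rtimes C_3$ has trivial socle. Solvability of $(B,\cdot)$ alone does not give ideals either, since there are finite simple braces of non-prime order with abelian additive group, and their multiplicative groups are solvable. An ideal of prime order amounts to skew-brace supersolvability. You cannot borrow that from Theorem~\ref{Th: B ssolv}, because it assumes the very additive $Z$-property you are trying to prove.

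Step~2 then only says that $(B,+)$ is an extension of $C_p$ by a $Z$-group, and such an extension need not be a $Z$-group. Step~3 needs a Sylow $p$-subgroup of $(B,\cdot)$ that is $\lambda$-invariant, but you give no reason one exists. The closing ``single chain of $p$-chief factors'' argument has no content: every $p$-group has a chain of subgroups with factors of order $p$. A chain of ideals with prime-order factors also says nothing about cyclicity; the trivial brace on $C_p\times C_p$ has one.

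The missing idea is to look for subbraces on the additive side, where they come for free. Every characteristic subgroup of $(B,+)$ is $\lambda$-invariant, hence a subbrace, since $xy=x+\lambda_x(y)$.
\begin{itemize}
\item By Feit--Thompson, $(B,+)$ is solvable, so $F(B,+)\neq 1$.
\item Each Sylow subgroup of $F(B,+)$ is characteristic in $(B,+)$, hence a subbrace of prime-power order. Its multiplicative group is a $p$-subgroup of the $Z$-group $(B,\cdot)$, hence cyclic, and the odd $p$-power fact you quote makes it additively cyclic.
\item So $F(B,+)$ is cyclic. Lemma~\ref{lem:derived-in-fitt} then gives $(B,+)'$ cyclic, so $(B,+)$ is supersolvable.
\end{itemize}

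Only now does your smallest-prime idea work, and it must be applied in $(B,+)$, not in $(B,\cdot)$. The normal Hall $p'$-subgroup $N$ of $(B,+)$ is characteristic. So $(N,\cdot)$ is a Hall $p'$-subgroup of the supersolvable group $(B,\cdot)$, and therefore equals its normal Hall $p'$-subgroup. Thus $N$ is an ideal. Induction applies to $N$. Since $(B/N,\cdot)\cong C_{p^a}$, the $p$-power fact makes $(B,+)/N$, i.e.\ the Sylow $p$-subgroup of $(B,+)$, cyclic.

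Incidentally, the example showing that oddness is needed is the one with $(B,\cdot)\cong C_8$ and $(B,+)\cong D_8$, not Example~\ref{example1}.
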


The proofs of these results rely on the realization of skew braces in terms of regular subgroups of holomorphs, together with standard structural properties of finite \(Z\)-groups and supersolvable groups. For the convenience of the reader, Section~\ref{Preliminaries} collects the necessary background on skew braces, holomorphs, and \(Z\)-groups, while Section~\ref{Proofs} contains the proofs of the main results.
\section{Preliminaries} \label{Preliminaries}

\subsection{Skew braces}
Let \(B\) be a skew brace. Equation~\eqref{eq:left-dist} yields the same identity element in both additive and multiplicative groups. We denote it by~$0$. Then, the multiplicative group acts on the additive group by means of a homomorphism $\lambda\colon a \in (B,\cdot) \mapsto \lambda_a\in \Aut(B,+)$, with $\lambda_a(b) = -a+ab$ for every $a,b\in B$. A subset \(S\subseteq B\) is a \emph{subbrace} of \(B\) if \((S,+)\leq (B,+)\) and \((S,\cdot)\leq (B,\cdot)\); in this case \((S,+,\cdot)\) is itself a skew brace. In particular, if \(H\) is a characteristic subgroup of \((B,+)\), then \((H,\cdot)\leq (B,\cdot)\), so \((H,+,\cdot)\) is a subbrace of \(B\) (see \cite[Lemma~2.6]{damele2026finitegroupspropercharacteristic}). A subset \(I\subseteq B\) is an \emph{ideal} of \(B\) if $(I,+)\trianglelefteq (B,+)$, $ (I,\cdot)\trianglelefteq (B,\cdot),$ and $\lambda_b(I)=I$ for every \(b\in B\).  In this case one can form the quotient skew brace \((B/I,+,\cdot)\), where additive and multiplicative cosets coincide, that is, $a+I=aI$ for every $a \in B$. The \emph{socle} of \(B\) is defined by $\mathrm{Soc}(B)=\ker\lambda\cap Z(B,+)$. It is well known that \(\mathrm{Soc}(B)\) is an ideal of \(B\). Following \cite{ballesterbolinches2024finiteskewbracessquarefree}, we say that \(B\) is \emph{supersolvable} if there exists a chain of ideals $\{0\}=I_0\leq I_1\leq \cdots \leq I_n=B$ such that each factor \(I_{j+1}/I_j\) has prime order.
\subsection{Holomorph and skew braces}

Let \(H\) be a finite group. Recall that its holomorph is $\mathrm{Hol}(H)=H\rtimes \mathrm{Aut}(H)$ where $\mathrm{Aut}(H)$ act on $H$ by evaluation. Every element of \(\mathrm{Hol}(H)\) can be written as a pair \((x,f)\), with $x \in H$ and $f \in \mathrm{Aut}(H)$ and \(\mathrm{Hol}(H)\) acts naturally on \(H\) via
\[
(x,f)\cdot h=x\,f(h)
\qquad
\text{for all } h\in H.
\]
Now let \(G\leq \mathrm{Hol}(H)\). By restriction, \(G\) acts on \(H\) through the same rule. We say that \(G\) is a \emph{regular subgroup} of \(\mathrm{Hol}(H)\) if this action is regular, that is, if it is both transitive and free. Equivalently, \(G\) is regular if for every \(h_1,h_2\in H\) there exists a unique \(g\in G\) such that
\[
g\cdot h_1=h_2.
\]

\begin{Note} \label{RmK: transitive of the same dimension}
Let \(G\leq \mathrm{Hol}(H)\), and suppose that $|G|=|H|$. Then \(G\) is regular if and only if it acts transitively on \(H\). Indeed, regularity always implies transitivity. Conversely, assume that the action of \(G\) on \(H\) is transitive, and let \(h\in H\). Thus $|G|=|\mathrm{Orb}_G(h)|$ where $\mathrm{Orb}_G(h)$ is the orbit of $h$. Let $G_h$ be the stabilizer of $h$. By the orbit-stabilizer theorem, $|G|=|\mathrm{Orb}_G(h)|\,|G_h|$. Since the action is transitive, we have $|\mathrm{Orb}_G(h)|=|H|=|G|$. Thus $|G_h|=1$. Since \(h\) was arbitrary, the action is free. Hence the action is regular.
\end{Note}

The connection between skew braces and regular subgroups of the holomorph is summarized in the following well-known theorem.

\begin{Theorem}[{\cite[Theorem~4.2]{Guarnieri_2016}}] \label{Connection}
Let \(H\) be a group and let \(G\) be a regular subgroup of \(\mathrm{Hol}(H)\). Then there exists a skew brace \((B,+,\cdot)\) such that $(B,+)\cong H$ and $(B,\cdot)\cong G$. Conversely, if \((B,+,\cdot)\) is a skew brace, then \((B,\cdot)\) can be identified with a regular subgroup of \(\mathrm{Hol}(B,+)\) via the map $(B,\cdot)\longrightarrow \mathrm{Hol}(B,+),
 \ b\longmapsto (b,\lambda_b)$.
\end{Theorem}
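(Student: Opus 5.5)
The plan is to prove both directions directly, using the orbit map of the regular action in one direction and the map \(\lambda\) in the other. Write the group operation of \(H\) multiplicatively.

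\emph{From a regular subgroup to a skew brace.} Let \(G\leq\mathrm{Hol}(H)\) be regular and consider the orbit map \(\pi\colon G\to H\), \(g\mapsto g\cdot 1\).
\begin{itemize}
\item Since \((x,f)\cdot 1=x\,f(1)=x\), we have \(\pi(x,f)=x\).
\item By regularity, \(\pi\) is a bijection. Hence every \(a\in H\) determines a unique \(f_a\in\mathrm{Aut}(H)\) with \((a,f_a)\in G\).
\item Take \(B=H\) as a set, let \(+\) be the operation of \(H\), and transport the group structure of \(G\) along \(\pi\), setting \(a\cdot b=\pi\bigl(\pi^{-1}(a)\pi^{-1}(b)\bigr)\). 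Then \((B,+)\cong H\) and \((B,\cdot)\cong G\) by construction.
\item Multiplication in the holomorph gives \((a,f_a)(b,f_b)=(a f_a(b),f_af_b)\). So \(a\cdot b=a\,f_a(b)\), and \(f_{a\cdot b}=f_af_b\).
\end{itemize}
The skew brace identity \eqref{eq:left-dist} is then a one-line check:
\[
a\cdot(b+c)=a\,f_a(b)f_a(c)=\bigl(a f_a(b)\bigr)a^{-1}\bigl(a f_a(c)\bigr)=a\cdot b-a+a\cdot c .
\]

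\emph{From a skew brace to a regular subgroup.} Let \((B,+,\cdot)\) be a skew brace. First I would check that the two operations share the identity \(0\). Then I would derive \(a\cdot(-b)=a-a\cdot b+a\) by expanding \(a\cdot 0=a\cdot(b+(-b))\) with \eqref{eq:left-dist}.
\begin{itemize}
\item \emph{Each \(\lambda_a\) is an automorphism of \((B,+)\).} Additivity comes from \eqref{eq:left-dist}: \(\lambda_a(b+c)=-a+ab-a+ac=\lambda_a(b)+\lambda_a(c)\). Bijectivity holds because \(\lambda_a\) is the composite of left multiplication by \(a\) with left addition of \(-a\).
\item \emph{\(\lambda\) is a homomorphism, i.e.\ \(\lambda_a\lambda_b=\lambda_{ab}\).} I would compute
\[
\lambda_a(\lambda_b(c))=\lambda_a(-b)+\lambda_a(bc),
\]
using additivity of \(\lambda_a\). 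The identity for \(a\cdot(-b)\) gives \(\lambda_a(-b)=-ab+a\), and \(\lambda_a(bc)=-a+abc\). So the sum is \(-ab+abc=\lambda_{ab}(c)\). This is the only step needing care, and I expect it to be the main (mild) obstacle: one must keep track of the non-commutative additive inverses.
\item \emph{The map \(\phi\colon b\mapsto(b,\lambda_b)\) is a group embedding.} It is a homomorphism because
\[
(a,\lambda_a)(b,\lambda_b)=(a+\lambda_a(b),\lambda_a\lambda_b)=(ab,\lambda_{ab}).
\]
It is injective because its first coordinate is \(b\).
\item \emph{The image is regular.} The induced action is \((b,\lambda_b)\cdot h=b+\lambda_b(h)=bh\), which is left multiplication in the group \((B,\cdot)\). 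Left multiplication is regular: for \(h_1,h_2\) the unique element moving \(h_1\) to \(h_2\) is \(h_2h_1^{-1}\).
\end{itemize}
In the finite case regularity also follows from transitivity (since \(\phi(b)\cdot 0=b\)) together with Remark~\ref{RmK: transitive of the same dimension}.
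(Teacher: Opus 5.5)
Your argument is correct in both directions. The paper gives no proof of its own and only cites Guarnieri--Vendramin \cite[Theorem~4.2]{Guarnieri_2016}; your argument is essentially the standard proof given there: the orbit bijection $(a,f_a)\mapsto a$ with $a\cdot b=a\,f_a(b)$ in one direction, and in the other the homomorphism $\lambda$ and the embedding $b\mapsto(b,\lambda_b)$, whose action on $(B,+)$ is left multiplication in $(B,\cdot)$.
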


\subsection{Z-groups and almost Sylow cyclic groups}
Recall that a finite group \(G\) is called a \(Z\)-group if all its Sylow subgroups are cyclic. The next theorem gives a full description of such groups:

\begin{Theorem}[{\cite[Theorem~9.4.3]{Hall1959}}] \label{Zgroups}
Let \(G\) be a finite \(Z\)-group. Then \(G\) admits a presentation of the form
\[
G=\langle a,b \mid a^{m}=b^{n}=1,\; bab^{-1}=a^{r}\rangle,
\]
where
\[
(m,n)=(m,r-1)=1
\qquad\text{and}\qquad
r^{n}\equiv 1 \pmod{m}.
\]
\end{Theorem}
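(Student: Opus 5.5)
The statement is Hall's classical presentation of finite \(Z\)-groups, and I would prove it in four stages: first that \(G\) is solvable, then that \(G'\) and \(G/G'\) are cyclic, then that \(G'\) is a Hall subgroup, and finally read off the presentation from a Schur--Zassenhaus splitting.

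\emph{Solvability.} Let \(p\) be the smallest prime dividing \(|G|\), and let \(P\in\mathrm{Syl}_p(G)\), which is cyclic of order \(p^k\).
\begin{itemize}
\item \(N_G(P)/C_G(P)\) embeds in \(\Aut(P)\), which has order \(p^{k-1}(p-1)\).
\item Since \(P\) is abelian, \(P\le C_G(P)\), so \(N_G(P)/C_G(P)\) is a \(p'\)-group. Its order therefore divides \(p-1\).
\item Every prime dividing \(|G|\) is at least \(p\), so this order is \(1\) and \(N_G(P)=C_G(P)\).
\item Burnside's transfer theorem then gives a normal \(p\)-complement \(K\).
\end{itemize}
Since \(K\) is again a \(Z\)-group, induction on \(|G|\) shows that \(G\) is solvable.

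\emph{\(G'\) and \(G/G'\) are cyclic.} The Fitting subgroup \(F=F(G)\) is nilpotent with cyclic Sylow subgroups, so \(F\) is cyclic. In a solvable group \(C_G(F)\le F\), and \(F\) is abelian, so \(C_G(F)=F\). Hence \(G/F\) embeds in the abelian group \(\Aut(F)\), which gives \(G'\le F\). Thus \(G'\) is cyclic, say \(G'=\langle a\rangle\) of order \(m\). Also \(G/G'\) is abelian with cyclic Sylow subgroups, hence cyclic.

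\emph{\(G'\) is a Hall subgroup.} This is the step I expect to be the main obstacle. Fix a prime \(p\) and \(P\in\mathrm{Syl}_p(G)\).
\begin{itemize}
\item Because \(P\) is abelian, Burnside's fusion theorem combined with the focal subgroup theorem gives \(P\cap G'=[P,N_G(P)]\).
\item Any element \(x\in N_G(P)\) acts on \(P\) through \(N_G(P)/C_G(P)\), which is a \(p'\)-group. Indeed, a \(p\)-element \(x\) normalizing \(P\) makes \(P\langle x\rangle\) a \(p\)-group containing \(P\), so \(x\in P\le C_G(P)\).
\item So the action of \(N_G(P)\) on \(P\) is coprime. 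Coprime action gives \(P=[P,N_G(P)]\times C_P(N_G(P))\).
\item A cyclic \(p\)-group is directly indecomposable, so \(P\cap G'\) is either \(P\) or \(1\).
\end{itemize}
Hence \(G'\) is a Hall subgroup, and \(m\) is coprime to \(n:=|G/G'|\).

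\emph{The presentation.} By Schur--Zassenhaus, \(G'\) has a complement, which is isomorphic to \(G/G'\) and so is cyclic, say \(\langle b\rangle\) of order \(n\). Then \(G=\langle a\rangle\rtimes\langle b\rangle\), and the relations are obtained as follows.
\begin{itemize}
\item Normality of \(G'\) gives \(bab^{-1}=a^r\) for some integer \(r\).
\item Since \(b^n=1\), conjugating \(n\) times gives \(r^n\equiv1\pmod m\).
\item \(\langle a^{r-1}\rangle\) is normal in \(G\), and in \(G/\langle a^{r-1}\rangle\) the images of \(a\) and \(b\) commute. So this quotient is abelian and \(G'=\langle a\rangle\le\langle a^{r-1}\rangle\). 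This forces \((m,r-1)=1\).
\end{itemize}
Together with \((m,n)=1\), this is the required presentation.
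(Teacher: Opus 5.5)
Your proof is correct. The paper gives no proof of this statement: it quotes it from Hall's book \cite[Theorem~9.4.3]{Hall1959}. The only consequence the paper uses is that the commutator subgroup of a finite $Z$-group is cyclic, for instance at the start of the proof of Theorem~\ref{th: generalizedRump}, where it yields supersolvability of $(B,+)$. So your argument replaces a citation rather than offering an alternative to an argument in the paper.

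Measured against what the paper actually needs, your first two stages already suffice. Your second stage is essentially the paper's own Lemma~\ref{lem:derived-in-fitt}: $C_G(F)=F$ for cyclic $F$, and $\Aut(F)$ is abelian. The Hall-subgroup stage, combining Burnside fusion, the focal subgroup theorem and the coprime decomposition $P=[P,N_G(P)]\times C_P(N_G(P))$, is what gives the extra condition $(m,n)=1$ and hence the full presentation. It uses transfer-type tools of the same kind as the Burnside normal $p$-complement theorem in your first stage, so no new machinery enters.

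One small point is worth adding so that the word ``presentation'' is fully justified. In the abstract group $\Gamma=\langle a,b\mid a^m=b^n=1,\ bab^{-1}=a^r\rangle$, conjugation by $b$ maps the finite subgroup $\langle a\rangle$ into itself, hence onto itself. So $\langle a\rangle\trianglelefteq\Gamma$ and every element of $\Gamma$ has the form $a^ib^j$. Thus $|\Gamma|\le mn=|G|$, and the natural surjection $\Gamma\to G$ is an isomorphism.
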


In particular, the commutator subgroup of a finite $Z$-group is cyclic. We also recall that a finite group \(G\) is said to be \emph{almost Sylow cyclic} if all Sylow subgroups of odd order are cyclic and either the Sylow \(2\)-subgroups of \(G\) are trivial or each of them contains a cyclic subgroup of index \(2\).

\section{Proofs of the main results} \label{Proofs}

\subsection{Proof of Theorem~\ref{th: generalizedRump}}

Before proving Theorem \ref{th: generalizedRump}, we recall a standard result on transitive group actions. For the reader's convenience, we include a proof.

\begin{Lemma}
Let \(G\) be a group acting transitively on a set \(\Omega\) with $|\Omega|=q^r$ where \(q\) is a prime number. If \(Q\in \operatorname{Syl}_q(G)\), then \(Q\) acts transitively on \(\Omega\).
\end{Lemma}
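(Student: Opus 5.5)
Assume \(G\) is finite; this is the setting in which the lemma will be applied. The plan is to use orbit–stabilizer twice and compare \(q\)-parts of orders. Fix \(\omega\in\Omega\) and let \(G_\omega\) be its stabilizer in \(G\). Since the action is transitive, the orbit–stabilizer theorem gives
\[
|G| = |\Omega|\,|G_\omega| = q^r\,|G_\omega|.
\]
Write \(|G_\omega| = q^s m\) with \(q\nmid m\). Then \(|G|_q = q^{r+s}\), so \(|Q| = q^{r+s}\).

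Next consider the orbit \(Q\omega\) of \(\omega\) under \(Q\). Its stabilizer is \(Q_\omega = Q\cap G_\omega\). This is a \(q\)-subgroup of \(G_\omega\), so \(|Q\cap G_\omega|\) divides the \(q\)-part of \(|G_\omega|\), namely \(q^s\). Applying orbit–stabilizer to the action of \(Q\) gives
\[
|Q\omega| = \frac{|Q|}{|Q\cap G_\omega|} \ge \frac{q^{r+s}}{q^s} = q^r = |\Omega|.
\]
Since \(Q\omega\subseteq\Omega\), this forces \(Q\omega=\Omega\), so \(Q\) acts transitively.

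There is no real obstacle here. The only point needing care is the bound \(|Q\cap G_\omega|\le q^s\). It holds by Lagrange's theorem, because \(Q\cap G_\omega\) is a \(q\)-group contained in \(G_\omega\), so its order is a power of \(q\) dividing \(|G_\omega|\). Sylow's theorem is used only to identify \(|Q|=q^{r+s}\).

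An alternative route is a counting argument. Every \(Q\)-orbit has size a power of \(q\). One then checks that a single orbit of size \(q^r\) must occur, using the \(q\)-part computation above. The direct orbit–stabilizer argument is cleaner, so I would present that one.
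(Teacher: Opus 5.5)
Your proof is correct and is essentially the paper's argument: orbit--stabilizer for $G$ gives the $q$-part of $|G_\omega|$, and orbit--stabilizer for $Q$, together with Lagrange's bound $|Q\cap G_\omega|\le q^{s}$ in $G_\omega$, gives $|Q\omega|\ge q^r$. The paper also proves the reverse inequality $|Q\cap G_\omega|\ge q^{s}$ via $|QG_\omega:G_\omega|\le q^r$; your version shows this step is unnecessary, because $Q\omega\subseteq\Omega$ already forces equality.
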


\begin{proof}
Fix an element \(\omega\in \Omega\), and let \(H=G_\omega\) be the stabilizer of \(\omega\) in \(G\). Since \(G\) acts transitively on \(\Omega\), the orbit--stabilizer theorem gives
$[G:H]=|\Omega|=q^r$. Write $|G|=q^n m$ where $q\nmid m$. Then $|H|=q^{\,n-r}m$. So the highest power of \(q\) dividing \(|H|\) is \(q^{\,n-r}\). Since \(Q\in \operatorname{Syl}_q(G)\), we have \(|Q|=q^n\). Moreover $|Q:Q\cap H| = |QH:H|$ and \(QH/H\) is a \(q\)-subgroup of \(G/H\).  Since $|G/H|=q^r$ it follows that $|QH:H| \le q^r$. Hence
\[
|Q\cap H|
= \frac{|Q|}{|Q:Q\cap H|}
\ge \frac{q^n}{q^r}
= q^{\,n-r}.
\]
But \(Q\cap H\le H\), and the largest power of \(q\) dividing \(|H|\) is exactly \(q^{\,n-r}\). Therefore $|Q\cap H|=q^{\,n-r}$. Now the stabilizer of \(\omega\) for the action of \(Q\) is $Q_\omega = Q\cap G_\omega = Q\cap H.$
So by the orbit--stabilizer theorem again,
\[
|\operatorname{Orb}_Q(\omega)|
= [Q:Q_\omega]
= [Q:Q\cap H]
= \frac{q^n}{q^{\,n-r}}
= q^r.
\]
Since \(|\Omega|=q^r\), the orbit $\operatorname{Orb}_Q(\omega)$ has the same size as \(\Omega\), and therefore
$\operatorname{Orb}_Q(\omega)=\Omega$. Thus \(Q\) acts transitively on \(\Omega\).
\end{proof}

We are now ready to prove Theorem \ref{th: generalizedRump}.

\begin{proof}[Proof of Theorem~\ref{th: generalizedRump}]
Assume, for a contradiction, that the statement is false, and let \(B\) be a minimal counterexample. Since \((B,+)\) is a \(Z\)-group, by Theorem \ref{Zgroups} its commutator subgroup \((B,+)'\) is cyclic. Hence, by \cite[Theorem 5.2]{li2025comparisonadditionmultiplicationskew}, the multiplicative group \((B,\cdot)\) is solvable. Because \((B,+)'\) is cyclic, by \cite[Lemma 2.17]{Alshorm2022} $(B,+)$ is supersolvable. Let \(q\) be the smallest prime dividing \(|B|\), and let \(Q\) be a Sylow \(q\)-subgroup of \((B,+)\). By \cite[Theorem 9.1]{Huppert1967FiniteGroupsI} \((B,+)\) admits a normal $q'$-Hall say \(H\), so that
\[
(B,+)=H\rtimes Q.
\]
 Since $H$ is a normal $q'$-Hall, by \cite[Main Theorem 1.7]{Huppert1967FiniteGroupsI} $H$ is characteristic in $(B,+)$. Thus \((H,+,\cdot)\) is a subbrace of $B$. Since \((H,+)\) is again a \(Z\)-group, minimality of \(B\) implies that \((H,\cdot)\) is almost Sylow cyclic. Now let \(p\) be a prime dividing \(|B|\), and let \(P\in \mathrm{Syl}_p(B,\cdot)\). If \(p\neq q\), then \(p\mid |H|\) and $p$ is odd. Moreover, every Sylow \(p\)-subgroup of \((H,\cdot)\) is also a Sylow \(p\)-subgroup of \((B,\cdot)\). Hence \(P\) is cyclic. It remains to deal with the prime \(q\). Let \(\widetilde{Q}\in \mathrm{Syl}_q(B,\cdot)\). We claim that \(\widetilde{Q}\) is cyclic if \(q\) is odd, and that \(\widetilde{Q}\) contains a cyclic subgroup of index at most \(2\) if \(q=2\). Since \((B,\cdot)\) is a regular subgroup of \(\mathrm{Hol}(B,+)\), its natural action on the quotient \((B,+)/H\) yields a homomorphism
\[
\pi\colon (B,\cdot)\longrightarrow \mathrm{Hol}\bigl((B,+)/H\bigr).
\]
The image \(\pi(B,\cdot)\) acts transitively on \((B,+)/H\). Indeed, if \(x+H,y+H\in (B,+)/H\), then the regularity of the action of \((B,\cdot)\) on \((B,+)\) ensures that there exists \(g\in (B,\cdot)\) such that \(g\cdot x=y\). Passing to cosets, we obtain
\[
\pi(g)\cdot (x+H)=y+H.
\]
Set
\[
\Omega:=(B,+)/H,
\qquad |\Omega|=[(B,+):H]=q^r.
\]
Since \(\pi(B,\cdot)\) is transitive on \(\Omega\), and \(\pi(\widetilde{Q})\) is a Sylow \(q\)-subgroup of \(\pi(B,\cdot)\), it follows that \(\pi(\widetilde{Q})\) is transitive on \(\Omega\). On the other hand, \(\pi(\widetilde{Q})\) is a homomorphic image of \(\widetilde{Q}\), so
\[
|\pi(\widetilde{Q})|\leq |\widetilde{Q}|=q^r.
\]
But \(\pi(\widetilde{Q})\) is transitive on a set of size \(q^r\), and hence
\[
|\pi(\widetilde{Q})|\geq q^r.
\]
Thus
\[
|\pi(\widetilde{Q})|=q^r.
\]
It follows by Remark \ref{RmK: transitive of the same dimension} that \(\pi(\widetilde{Q})\) acts regularly on \(\Omega\), and the restriction
\[
\pi|_{\widetilde{Q}}\colon \widetilde{Q}\longrightarrow \pi(\widetilde{Q})
\]
is an isomorphism, since it is surjective and \(|\widetilde{Q}|=|\pi(\widetilde{Q})|\). Consequently, \(\widetilde{Q}\) is isomorphic to a regular subgroup of
\[
\mathrm{Hol}\bigl((B,+)/H\bigr).
\]
Since \((B,+)/H\cong Q \cong  C_{q^r}\) is cyclic, we may identify
\[
\mathrm{Hol}\bigl((B,+)/H\bigr)\cong \mathrm{Hol}(C_{q^r}).
\]
Hence, using Theorem \ref{Connection} and \cite[Theorem 1]{Rump2018ClassificationOC}, the group \(\widetilde{Q}\) is cyclic if \(q\) is odd, while for \(q=2\) it contains a cyclic subgroup of index at most \(2\). Therefore \((B,\cdot)\) is almost Sylow cyclic, contrary to the choice of \(B\). This completes the proof.
\end{proof}

The next example shows that the converse of Theorem~\ref{th: generalizedRump} fails. 

\begin{Example} \label{example1}
There exists a non-trivial skew brace \(B\) such that \((B,\cdot)\) is almost Sylow cyclic, while \((B,+)\) is not a \(Z\)-group. Indeed, \(A_4\) admits an exact factorization through \(V_4\cong C_2\times C_2\) and \(C_3\), and therefore, by \cite[Theorem~3.3]{Smoktunowicz_2018}, there exists a skew brace \(B\) such that $(B,+)\cong A_4$ and $(B,\cdot)\cong V_4\times C_3\cong C_2\times C_2\times C_3$. Since \(A_4\) is not a \(Z\)-group, this shows that the converse implication does not hold.
\end{Example}

\subsection{Proof of Theorem \ref{Th: B ssolv}}

Before proving Theorem \ref{Th: B ssolv} in full generality we first deal with the case in which the skew brace has order a power of $2$:

\begin{Lemma} \label{case p=2}
Let \(B\) be a finite skew brace such that
\[
(B,+)\cong C_{2^k}
\]
for some \(k\geq 1\). Then \(B\) is supersolvable.  
\end{Lemma}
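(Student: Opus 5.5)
We argue by induction on \(k\), using as ideals the characteristic subgroups of the cyclic group \((B,+)\cong C_{2^k}\). For \(k=1\) there is nothing to prove, since \(|B|=2\). For general \(k\), the natural candidate for a bottom ideal is the unique subgroup \(I\) of order \(2\) of \((B,+)\), namely \(I=\{0,t\}\) with \(t\) the additive involution.

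We first check that \(I\) is an ideal of \(B\).
\begin{enumerate}
\item Since \((B,+)\) is abelian, \((I,+)\trianglelefteq(B,+)\).
\item Since \(I\) is characteristic in \((B,+)\), we have \(\lambda_b(I)=I\) for every \(b\in B\).
\item For multiplicative normality, note that \(\lambda_b(t)=t\), because \(t\) is the unique involution of \((B,+)\). Hence \(bt=b+\lambda_b(t)=b+t\).
\item Likewise \(tb=t+\lambda_t(b)\). Thus the multiplicative cosets \(bI\) and \(Ib\) coincide with the additive coset \(b+I\), provided \(\lambda_t(b)\in b+I\) for every \(b\).
\end{enumerate}
To get the last condition, observe that \(\lambda_t\) is an automorphism of \(C_{2^k}\), so \(\lambda_t(b)=ub\) for some odd \(u\). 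Moreover \((I,\cdot)\) is a subgroup of order \(2\), since \(I\) is characteristic, so \(t\cdot t=0\) and \(\lambda_t^2=\mathrm{id}\). The requirement \(\lambda_t(b)-b\in I\) for all \(b\) amounts to \(u\equiv 1 \pmod{2^{k-1}}\). This is the main obstacle, because involutive automorphisms such as \(u=-1\) do not satisfy it for \(k\ge 3\). I would handle it using Rump's description of braces with cyclic additive group: by Theorem~\ref{th: generalizedRump}, or \cite[Theorem~1]{Rump2018ClassificationOC}, the image \(\lambda(B)\) is a \(2\)-subgroup of \(\Aut(C_{2^k})\). I would also use the identity \(\lambda_{t}=\lambda_{b t b^{-1}}\) together with the fact that \(t\) is central in \((B,\cdot)\). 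Centrality holds because \(\{0,t\}\) is a normal subgroup of order \(2\) of the multiplicative group, which is a \(2\)-group.

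If this direct route proves awkward, the fallback is to use the socle instead. For a brace of prime power order, the socle is a nontrivial ideal; for braces with abelian additive group this is Rump's result that finite \(p\)-power order braces are right nilpotent. Since \(\mathrm{Soc}(B)\) is a nonzero subgroup of \((B,+)\), it contains \(I\). Elements \(s\) of the socle satisfy \(\lambda_s=\mathrm{id}\), so \(sb=s+b\). It follows that any additive subgroup of \(\mathrm{Soc}(B)\) that is \(\lambda\)-invariant is an ideal, since its multiplicative cosets agree with its additive ones. The subgroup \(I\) is \(\lambda\)-invariant because it is characteristic, so \(I\) is an ideal of \(B\) with \(|I|=2\).

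Finally, \(B/I\) is a skew brace with additive group \(C_{2^{k-1}}\), so by induction it has a chain of ideals with factors of order \(2\). Taking preimages under \(B\to B/I\), which preserve the ideal property, and prepending \(\{0\}\le I\), we obtain a chain of ideals of \(B\) whose factors have order \(2\). Hence \(B\) is supersolvable.
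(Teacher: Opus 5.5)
\textbf{Gap: the justification of $\mathrm{Soc}(B)\neq 0$.} Your fallback argument has the same structure as the paper's proof. You take the unique additive subgroup $I$ of order $2$, note that $I\subseteq\mathrm{Soc}(B)$ once $\mathrm{Soc}(B)\neq 0$, check that $I$ is an ideal, and induct on $B/I$. Your coset check is correct: for a $\lambda$-invariant additive subgroup $J\subseteq\mathrm{Soc}(B)$ you have $bJ=b+\lambda_b(J)=b+J$ and $Jb=J+b$. It is a valid substitute for the paper's observation that $I$ is characteristic in $(\mathrm{Soc}(B),+)=(\mathrm{Soc}(B),\cdot)\trianglelefteq(B,\cdot)$.

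Your first, direct route should be dropped. There, centrality of $t$ is derived from normality of $\{0,t\}$ in $(B,\cdot)$, which is exactly what you are trying to prove. The obstacle you isolated disappears anyway once you know $t\in\ker\lambda$.

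The problem is the reason you give for $\mathrm{Soc}(B)\neq 0$. The general claim you invoke, that braces of prime-power order are right nilpotent and so have nonzero socle, is false. Rump's theorem for prime-power order gives left nilpotency in left-brace conventions. That yields a nonzero ideal fixed pointwise by every $\lambda_b$, not a nonzero kernel of $\lambda$.

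Here is a small counterexample. Write $h=(a,b,c)$ for the matrix $\left(\begin{smallmatrix}1&a&c\\0&1&b\\0&0&1\end{smallmatrix}\right)\in UT_3(\mathbb{F}_2)$. Then $\tau(h)=(ac+b,\;c+ab+b,\;a)$ is a bijective $1$-cocycle $UT_3(\mathbb{F}_2)\to\mathbb{F}_2^3$. So $\{(\tau(h),h)\}$ is a regular subgroup of $\mathrm{Hol}(C_2^3)$ meeting the translations trivially. This gives a brace of order $8$ with $\lambda$ injective, hence with $\mathrm{Soc}(B)=0$.

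\textbf{How to repair it.} The fact you need is specific to cyclic additive groups, and it is true. The paper gets it from Rump's corollary for cyclic braces. It also follows from a one-line count: $\lambda(B)\le\Aut(C_{2^k})$ has order at most $2^{k-1}<2^k=|B|$, so $\ker\lambda\neq 0$. Since $(B,+)$ is abelian, $\mathrm{Soc}(B)=\ker\lambda$. With this replacement your proof is complete.
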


\begin{proof}
We argue by induction on \(k\). If \(k=1\), then \(|B|=2\), and the conclusion is clear.
Assume now \(k\geq 2\), and suppose that the result holds for all skew braces whose additive group is cyclic of order \(2^{k-1}\).
Since \((B,+)\cong C_{2^{k}}\), it follows from the corollary on p.~319 of \cite{Rump2018ClassificationOC} that \(\mathrm{Soc}(B)\neq 1\). Let \(I\) be the unique subgroup of \((\mathrm{Soc}(B),+)\) of order \(2\). Then \(I\) is characteristic in \((\mathrm{Soc}(B),+)\) and so $I$ is characteristic in $(B,+)$.  On the other hand, the additive and multiplicative operations coincide on \(\mathrm{Soc}(B)\), because \(\mathrm{Soc}(B)\subseteq \ker\lambda\). Thus $(\mathrm{Soc}(B),+)= (\mathrm{Soc}(B),\cdot)$ and so \(I\) is also characteristic in \((\mathrm{Soc}(B),\cdot)\). Since \((\mathrm{Soc}(B),\cdot)\trianglelefteq (B,\cdot)\), it follows that $(I,\cdot)\trianglelefteq (B,\cdot)$. Therefore \(I\) is an ideal of \(B\) of order \(2\). By induction $B/I$ is supersolvable and so $B$ is supersolvable.
\end{proof}

We are ready to prove Theorem \ref{Th: B ssolv}.

\begin{proof}[Proof of Theorem \ref{Th: B ssolv}]
Suppose the theorem is false, and let \(B\) be a counterexample of minimal order. Let $r$ be the smallest prime dividing the order of $B$. We first show that
\[
r \mid [(B,+):(B,+)'].
\]
Indeed, since \((B,+)\) is a \(Z\)-group, reasoning as in Theorem \ref{th: generalizedRump} we may write
\[
(B,+)=A\rtimes C_{r^k}
\]
for some subgroup \(A\leq (B,+)\) such that \(r\nmid |A|\). Since
\[
(B,+)/A \cong C_{r^k},
\]
we deduce that
\[
(B,+)'\leq A.
\]
Thus \(r\nmid |(B,+)'|\), and therefore
\[
r \mid [(B,+):(B,+)'].
\]
Therefore, by \cite[Lemma 2.1]{GorshkovNasybullov2021} there exists a characteristic subgroup \(K\) of \((B,+)\) such that
\[
(B,+)/K \cong (C_r)^m
\]
for some \(m\). Since \((B,+)\) is a \(Z\)-group, also \((B,+)/K\) is a \(Z\)-group. Hence \(m=1\), and so
\[
[(B,+):K]=r.
\]
Since \(K\) is characteristic in \((B,+)\)
\[
(K,\cdot)\leq (B,\cdot).
\]
Moreover,
\[
|(B,\cdot):(K,\cdot)|=|(B,+):K|=r,
\]
Since $r$ is the smallest prime dividing \(|B|\) we deduce that  \((K,\cdot)\trianglelefteq (B,\cdot)\). Thus \(K\) is an ideal of \(B\). Since \(B\) is a minimal counterexample, the skew brace $(K,+,\cdot)$ is supersolvable and so  \((K,\cdot)\) is supersolvable. Moreover, by Theorem~\ref{th: generalizedRump}, the group \((K,\cdot)\) is almost Sylow cyclic. Let \(p\) be the largest prime dividing \(|B|\). If $p$ is even, then $p=r=2$ and the Theorem follows from Lemma \ref{case p=2}. Thus assume that $p$ is odd. Since \((B,+)\) is supersolvable, by \cite[Theorem 9.1]{Huppert1967FiniteGroupsI} its Sylow \(p\)-subgroup is unique; denote it by \(P\). Hence \(P\) is characteristic in \((B,+)\), and therefore
\[
(P,\cdot)\leq (B,\cdot).
\]
and $(P,\cdot)$ is a Sylow $p$-subgroup of $(B,\cdot)$. Since \(p\neq r\) and \([(B,+):K]=r\), we also have
\[
P\leq K,
\]
and therefore
\[
(P,\cdot)\leq (K,\cdot).
\]
Now \((P,\cdot)\) is a Sylow \(p\)-subgroup of \((K,\cdot)\). Since \((K,\cdot)\) is supersolvable and \(p\) is the largest prime dividing \(|K|\), again by \cite[Theorem 9.1]{Huppert1967FiniteGroupsI} \((P,\cdot)\) is characteristic in \((K,\cdot)\). As \((K,\cdot)\trianglelefteq (B,\cdot)\), we conclude that \((P,\cdot)\) is normal in \((B,\cdot)\) and thus characteristic. Take \(I\leq (P,+)\) of order \(p\). Since \((P,+)\) is cyclic, \(I\) is characteristic in \((P,+)\). As \(P\) is characteristic in \((B,+)\) we have
\[
(I,\cdot)\leq (B,\cdot).
\]
By order considerations,
\[
(I,\cdot)\leq (P,\cdot).
\]
Since $p$ is odd the group \((P,\cdot)\) is cyclic. Thus $(P,\cdot)$ has a unique subgroup of order \(p\), say \(J\). 
 Therefore
\[
I=J.
\]
It follows that \((I,\cdot)\) is normal in \((B,\cdot)\). Hence \(I\) is an ideal of \(B\) of order \(p\). By induction $B/I$ is supersolvable. Since $I$ has order $p$ we conclude that $B$ is supersolvable, a contradiction.
\end{proof}

As a corollary, we obtain a generalization of \cite[Proposition 13]{Rump2018ClassificationOC}.

\begin{Corollary}
    Let $B$ be a finite skew brace such that $(B,+)$ is a $Z$-group. Then the Sylow $2$-subgroup of $(B,\cdot)$ admits a normal complement.
\end{Corollary}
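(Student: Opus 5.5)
The plan is to derive the corollary from Theorem~\ref{Th: B ssolv}. A skew brace being supersolvable forces its multiplicative group to be supersolvable, and supersolvable groups have a normal $2$-complement.

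\textbf{Step 1: $(B,\cdot)$ is supersolvable.} By Theorem~\ref{Th: B ssolv}, $B$ has a chain of ideals
\[
\{0\}=I_0\leq I_1\leq \cdots \leq I_n=B
\]
with each factor $I_{j+1}/I_j$ of prime order. Each $I_j$ is an ideal, so $(I_j,\cdot)\trianglelefteq (B,\cdot)$. Multiplicative cosets coincide with additive cosets, so $|(I_{j+1},\cdot)/(I_j,\cdot)|=|I_{j+1}/I_j|$ is prime. Hence this is a normal series of $(B,\cdot)$ with cyclic factors of prime order, and $(B,\cdot)$ is supersolvable.

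\textbf{Step 2: a normal $2$-complement.} If $|B|$ is odd, the Sylow $2$-subgroup is trivial and $(B,\cdot)$ itself is the complement. Otherwise, apply \cite[Theorem 9.1]{Huppert1967FiniteGroupsI}, as already done in the proof of Theorem~\ref{Th: B ssolv}. In a supersolvable group, for the largest prime $p$ the Sylow $p$-subgroup is normal, and iterating gives a Sylow tower: for each prime $q$, the product of the Sylow subgroups for primes larger than $q$ is a normal Hall subgroup. Taking $q=2$, the smallest prime, the Hall $2'$-subgroup $N$ of $(B,\cdot)$ is normal. By Schur--Zassenhaus, or just by orders, $N$ is a normal complement to any Sylow $2$-subgroup.

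As an alternative route, one could use Burnside's transfer theorem. By Theorem~\ref{th: generalizedRump}, the Sylow $2$-subgroup $S$ contains a cyclic subgroup of index at most $2$. Then $N_{(B,\cdot)}(S)/C(S)$ is a $2$-group, since $\Aut(S)$ is a $2$-group for such $S$ except $Q_8$, $C_2\times C_2$. This route is more delicate, so I would use the supersolvability route.

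\textbf{Main obstacle.} Step 1 is the only point needing care: checking that the ideal chain of the brace really gives a normal series of $(B,\cdot)$ with factors of the same prime orders. Everything after that is standard group theory.
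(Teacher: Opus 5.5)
Your proposal is correct and follows the paper's proof: Theorem~\ref{Th: B ssolv} makes $B$ supersolvable, the ideal chain is then a normal series of $(B,\cdot)$ with prime-order factors (a step the paper leaves implicit and you spell out), and the Sylow tower from \cite[Theorem 9.1]{Huppert1967FiniteGroupsI} gives a normal Hall $2'$-subgroup. Your aside via Burnside's transfer theorem would not work as stated, since Burnside requires $S\le Z(N_G(S))$ and hence $S$ abelian (for instance $S_4$ has Sylow $2$-subgroup $D_8$ with $\Aut(D_8)$ a $2$-group, yet no normal $2$-complement), but you rightly do not rely on it.
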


\begin{proof}
    Since $(B,+)$ is a $Z$-group, by Theorem \ref{Th: B ssolv} $B$ is supersolvable. Thus $(B,\cdot)$ is supersolvable and by \cite[Theorem 9.1]{Huppert1967FiniteGroupsI} the Sylow $2$-subgroup of $(B,\cdot)$ admit a normal complement.
\end{proof}

\subsection{Proof of Theorem \ref{Th: Multiplicative-Z-group}}

We begin by recalling a standard fact from finite group theory that will be needed in the proof of Theorem \ref{Th: Multiplicative-Z-group}. For completeness, we provide a short proof. Recall that, if \(G\) is a finite group, then the Fitting subgroup \(F(G)\) is the largest normal nilpotent subgroup of \(G\):

\begin{Lemma}\label{lem:derived-in-fitt}
Let \(G\) be a finite solvable group. If \(F(G)\) is cyclic, then $G'\leq F(G)$.
\end{Lemma}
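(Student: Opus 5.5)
The plan is to use the standard fact that in a finite solvable group the Fitting subgroup is self-centralizing, $C_G(F(G))\le F(G)$, and to exploit that the automorphism group of a cyclic group is abelian. Set $F=F(G)$. Since $F$ is normal in $G$, conjugation gives a homomorphism
\[
\varphi\colon G\longrightarrow \Aut(F),\qquad g\longmapsto (x\mapsto gxg^{-1}),
\]
whose kernel is exactly $C_G(F)$.

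The first step is to note that $F$ is cyclic, so $\Aut(F)$ is abelian: it is isomorphic to the unit group $(\mathbb{Z}/|F|\mathbb{Z})^\times$. Hence $G/C_G(F)$, which embeds in $\Aut(F)$ via $\varphi$, is abelian. It follows that $G'\le C_G(F)$.

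The second step is to invoke $C_G(F)\le F$ for solvable $G$, which gives $G'\le F$ as required. I would quote this as a standard result (for instance from Huppert), but it can also be sketched directly.

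\begin{itemize}
\item Put $C=C_G(F)$, which is normal in $G$. Suppose, for a contradiction, that $C\not\le F$.
\item Note that $C\cap F=Z(F)$, which is normal in $G$. Since $G$ is solvable, the nontrivial normal subgroup $C/(C\cap F)$ of $G/(C\cap F)$ contains a minimal normal subgroup $A/(C\cap F)$ of $G/(C\cap F)$, and this subgroup is abelian.
\item Then $A$ is a normal subgroup of $G$, and $C\cap F$ is central in $A$ because $A\le C$ centralizes $F$.
\item Hence $A$ is a central extension of the abelian group $A/(C\cap F)$, so $A$ is nilpotent of class at most $2$.
\item Therefore $A\le F$, so $A\le C\cap F$. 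This contradicts $A/(C\cap F)\neq 1$.
\end{itemize}

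The only genuinely substantive step is this self-centralizing property of the Fitting subgroup. Everything else is the observation that automorphism groups of cyclic groups are abelian. I expect no real obstacle beyond quoting or sketching that fact correctly.
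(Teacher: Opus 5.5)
Your proposal is correct and follows essentially the same route as the paper: the paper combines the self-centralizing property $C_G(F(G))\le F(G)$ (quoted from Huppert) with the fact that $\Aut(F(G))$ is abelian, and your only differences are that you use just the inclusion $C_G(F)\le F$ rather than equality and you also sketch that property. Your sketch, via a minimal normal subgroup of $G/Z(F)$ inside $C_G(F)/Z(F)$ giving a normal nilpotent subgroup of class at most $2$, is valid.
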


\begin{proof}
Since \(F(G)\) is cyclic, it is abelian, and therefore $C_G(F(G))\supseteq F(G)$. On the other hand, by \cite[Theorem 4.2]{Huppert1967FiniteGroupsI} $C_G(F(G))\leq F(G)$. Hence $C_G(F(G))=F(G)$. Therefore the conjugation action of \(G\) on \(F(G)\) induces an injective homomorphism
\[
G/F(G)\longrightarrow \Aut(F(G)).
\]
Since \(F(G)\) is cyclic, the group \(\Aut(F(G))\) is abelian. It follows that \(G/F(G)\) is abelian, and thus $G'\leq F(G)$.
\end{proof}

We are now in a position to prove Theorem~\ref{Th: Multiplicative-Z-group}.

\begin{proof}[Proof of Theorem \ref{Th: Multiplicative-Z-group}]
By Theorem~\ref{th: generalizedRump}, it is enough to prove that if \((B,\cdot)\) is a \(Z\)-group, then \((B,+)\) is also a \(Z\)-group. Assume therefore that \((B,\cdot)\) is a \(Z\)-group, and suppose by contradiction that \((B,+)\) is not a \(Z\)-group. Let \(B\) be a counterexample of minimal order. Since \(|B|\) is odd, the Feit--Thompson theorem (see the main Theorem of \cite{FeitThompson1963}) implies that \((B,+)\) is solvable. Hence, the Fitting subgroup of $(B,+)$ is not trivial: \[ F(B,+)\neq 1. \] Let \(p\) be a prime divisor of \(|F(B,+)|\), and let \(P\in \mathrm{Syl}_p(F(B,+))\). Since \(F(B,+)\) is nilpotent, \(P\) is characteristic in \(F(B,+)\), hence characteristic in \((B,+)\). Therefore \((P,+,\cdot)\) is a subbrace of \(B\). Moreover, \((P,\cdot)\) is a \(p\)-subgroup of \((B,\cdot)\). Since \((B,\cdot)\) is a \(Z\)-group, all its Sylow subgroups are cyclic, and thus \((P,\cdot)\) is cyclic. It then follows from \cite[Theorem~1.2]{damele2026finitegroupspropercharacteristic} that \((P,+)\) is cyclic. Thus every Sylow subgroup of \(F(B,+)\) is cyclic. Since \(F(B,+)\) is nilpotent, it follows that \(F(B,+)\) itself is cyclic. Thus, by Lemma \ref{lem:derived-in-fitt} $(B,+)'$ is cyclic. In particular, by \cite[Lemma 2.17]{Alshorm2022} \((B,+)\) is supersolvable. Now choose the smallest prime \(p\) that divides $|B|$ and write \[ |B|=mp^a,\qquad (m,p)=1, \] Since \((B,\cdot)\) is a \(Z\)-group, it follows from the argument in the proof of Theorem \ref{th: generalizedRump} that \((B,\cdot)\) is supersolvable. Therefore, by \cite[Theorem 9.1]{Huppert1967FiniteGroupsI}, we have
\[
(B,\cdot)=A\rtimes C_{p^a},
\]
where \(A\) denotes the normal Hall \(p'\)-subgroup of \((B,\cdot)\) of order \(m\). In particular, \(A\) is uniquely determined by \cite[Main Theorem 1.7]{Huppert1967FiniteGroupsI}. Since \((B,+)\) is supersolvable, let \(N\) be a normal Hall \(p'\)-subgroup of \((B,+)\). By \cite[Main Theorem 1.7]{Huppert1967FiniteGroupsI} $N$ is a characteristic subgroup of $(B,+)$. Thus $(N,+,\cdot)$ is a subbrace of $B$. Since $(N,\cdot)$ is a $p'$-Hall of $(B,\cdot)$ we deduce that $N=A$. In particular \(N\) is an ideal of \(B\). By minimality of \(B\), the skew brace \(N\) satisfies that \((N,+)\) is a \(Z\)-group, since \((N,\cdot)=(A,\cdot)\) is a \(Z\)-group. Consider now the quotient skew brace \(B/N\). Since \(N\) is an ideal, we have \[ (B/N,\cdot)\cong (B,\cdot)/(N,\cdot)\cong C_{p^a}. \] By \cite[Theorem~1.2]{damele2026finitegroupspropercharacteristic}, the additive group \((B/N,+)\) is cyclic. Since \(N\) is a normal Hall \(p'\)-subgroup of \((B,+)\), we may write \[ (B,+)=N\rtimes Q, \] where \(Q\in \mathrm{Syl}_p(B,+)\). As \[ (B,+)/N\cong Q, \] and \((B,+)/N\) is cyclic, it follows that \(Q\) is cyclic. A contradiction.

\end{proof}

\begin{Example}
The assumption that \(B\) has odd order in Theorem~\ref{Th: Multiplicative-Z-group} is necessary. Indeed, by \cite[Example~2.11]{acri2019skewleft}, there exists a skew brace \(B\) such that $(B,\cdot)\cong C_8$ and $(B,+)\cong D_8$. In particular, \((B,\cdot)\) is a \(Z\)-group, whereas \((B,+)\) is not.
\end{Example}

As a corollary we obtain:

\begin{Corollary}
    Let $B$ be a finite skew brace of odd order such that either $(B,+)$ or $(B,\cdot)$ is a $Z$-group. Then $B$ is supersolvable.
\end{Corollary}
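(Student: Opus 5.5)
The corollary follows directly from Theorem~\ref{Th: B ssolv} together with Theorem~\ref{Th: Multiplicative-Z-group}. Let $B$ be a finite skew brace of odd order, and suppose that either $(B,+)$ or $(B,\cdot)$ is a $Z$-group. The first step is to reduce to the case where the additive group is a $Z$-group.

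If $(B,+)$ is a $Z$-group, there is nothing to reduce. If instead $(B,\cdot)$ is a $Z$-group, then, since $|B|$ is odd, Theorem~\ref{Th: Multiplicative-Z-group} shows that $(B,+)$ is also a $Z$-group. In either case we conclude that $(B,+)$ is a $Z$-group.

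The second step is to apply Theorem~\ref{Th: B ssolv} to $B$. This gives a chain of ideals $\{0\}=I_0\leq I_1\leq\cdots\leq I_n=B$ whose factors $I_{j+1}/I_j$ all have prime order, which is exactly the definition of $B$ being supersolvable.

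No real obstacle arises. The only point needing care is that the odd-order hypothesis is used solely to invoke Theorem~\ref{Th: Multiplicative-Z-group}, and only in the case where the hypothesis is on $(B,\cdot)$. All the substantive work is already contained in the two cited theorems.
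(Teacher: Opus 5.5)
Your proposal is correct and follows the paper's proof exactly. You first use odd order and Theorem~\ref{Th: Multiplicative-Z-group} to pass from $(B,\cdot)$ being a $Z$-group to $(B,+)$ being a $Z$-group, and then you apply Theorem~\ref{Th: B ssolv} to get supersolvability.
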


\begin{proof}
    It follows from Theorem \ref{Th: Multiplicative-Z-group} and Theorem \ref{Th: B ssolv}.
\end{proof}

\bibliographystyle{plain}
\bibliography{bibgroup}
\end{document}